\newtheorem{theorem}{Theorem}
\newtheorem{lemma}[theorem]{Lemma}
\newtheorem{claim}[theorem]{Claim}
\newtheorem*{question}{Question}
\newcommand{\Aut}[0]{\operatorname{Aut}}
\def\Stab{\operatorname{Stab}}
\title{\bf Automorphism Groups and Adversarial Vertex Deletions}
\author{Derrick Stolee\\ Department of Mathematics\\ Department of Computer Science\\ University of Nebraska--Lincoln\\ \texttt{s-dstolee1@math.unl.edu}}
\date{\today}
\begin{document}

\maketitle

\begin{abstract}
Any finite group can be encoded as the automorphism group of an unlabeled simple graph.
Recently Hartke, Kolb, Nishikawa, and Stolee (2010) demonstrated a construction that allows any ordered pair of finite groups to be represented as the automorphism group of a graph and a vertex-deleted subgraph.
In this note, we describe a generalized scenario as a game between a player and an adversary:
An adversary provides a list of finite groups and a number of rounds.
The player constructs a graph with automorphism group isomorphic to the first group.
In the following rounds, the adversary selects a group and the player deletes a vertex such that the automorphism group of the corresponding vertex-deleted subgraph is isomorphic to the selected group.
We provide a construction that allows the player to appropriately respond to any  sequence of challenges from the adversary.
\end{abstract}

\vspace{2em}

Automorphisms of graphs are incredibly unstable.
The slightest perturbation of the graph can greatly change the automorphism group.
In this note, we show there exist graphs whose automorphism groups can change dramatically under certain sequences of vertex deletions.
We consider undirected, unlabeled, and simple graphs, denoted $F$, $G$, or $H$, and finite groups, denoted $\Gamma$.
The automorphism group of a graph $G$ is denoted $\Aut(G)$.

Frucht \cite{Fru39} proved that graphs have the ability to encode the structure of any finite group.

\begin{theorem}[Frucht \cite{Fru39}]
	Let $\Gamma$ be a finite group. 
	There exists a graph $G$ with $\Aut(G) \cong \Gamma$.
\end{theorem}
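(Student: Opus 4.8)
The plan is to realize $\Gamma$ first as the automorphism group of an auxiliary \emph{colored directed} graph, where it appears for essentially formal reasons, and then to strip away the colors and orientations by substituting rigid gadgets, taking care that no new symmetries are created.

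First I would fix a generating set $S = \{s_1,\dots,s_k\}$ of $\Gamma$ with $1 \notin S$, and form the Cayley color digraph $D$ on vertex set $\Gamma$, placing an arc of color $i$ from $g$ to $gs_i$ for every $g \in \Gamma$ and every $i$. Since the left multiplications $L_h : x \mapsto hx$ are a regular action commuting with right multiplication, each $L_h$ is a color- and orientation-preserving automorphism of $D$, giving an injection $\Gamma \hookrightarrow \Aut(D)$. Conversely, any color- and orientation-preserving automorphism $\varphi$ is determined by $\varphi(1)$: the color-$i$ arc out of $1$ forces $\varphi(s_i) = \varphi(1)s_i$, and since $S$ generates, induction on word length gives $\varphi(g) = \varphi(1)g$ for all $g$, so $\varphi = L_{\varphi(1)}$. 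Hence $\Aut(D) \cong \Gamma$, where $\Aut(D)$ denotes the color- and orientation-preserving automorphisms.

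Next I would replace each colored arc by a small uncolored undirected gadget recording both its color and its orientation. For an arc of color $i$ directed from $u$ to $v$, I would subdivide it using two new vertices $a,b$ placed in the order $u,a,b,v$, and attach to $a$ and to $b$ two pendant paths whose lengths are distinct and are strictly increasing functions of $i$: the asymmetry between $a$ and $b$ encodes the orientation, and the length profile encodes the color, so gadgets of different colors are non-isomorphic. Writing $G$ for the resulting simple graph, the inclusion $\Gamma \hookrightarrow \Aut(G)$ is immediate, since each $L_h$ extends across the gadgets.

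The main work, and the expected obstacle, is the reverse inclusion: showing that no accidental symmetry appears. The plan is to argue that the group vertices are distinguished from all gadget vertices (for instance by degree or by local pendant-path structure, once the gadget sizes are taken large enough), so any automorphism permutes $\Gamma$ among itself; that the pendant-path lengths force each gadget to map to a gadget of the same color and preserve its two ends, hence its orientation; and therefore that the induced permutation of $\Gamma$ is a color- and orientation-preserving automorphism of $D$. Combined with the first inclusion this yields $\Aut(G) \cong \Aut(D) \cong \Gamma$. The delicate point is the choice of gadget parameters guaranteeing rigidity and excluding symmetries internal to a single gadget; a clean way to handle this is to make every auxiliary path length distinct and large, so that no gadget vertex can be confused with a group vertex or with a gadget vertex of another color.
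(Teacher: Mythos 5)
The paper does not prove this theorem; it is quoted as a classical result of Frucht, and the paper's constructions build on top of it (via Lemma~\ref{lemma:stabilizedgamma}). Your sketch is essentially Frucht's original argument: realize $\Gamma$ as the color- and orientation-preserving automorphism group of a Cayley color digraph via the regular left action, then replace each colored arc by a rigid asymmetric undirected gadget whose pendant-path lengths encode color and whose internal asymmetry encodes orientation. That approach is correct, and your verification that $\Aut(D)\cong\Gamma$ is complete. What remains only sketched is the rigidity step, and the one place where "distinguish by degree" genuinely needs care is when $|S|$ is small: with a single generator the group vertices have degree $2$ and can collide with interior vertices of the pendant paths, so you must fall back on the local-structure argument you allude to (e.g., a group vertex has all neighbors of degree $3$, while a path-interior vertex does not, once all pendant paths are long). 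With that caveat handled, and the trivial group treated separately as $K_1$, your outline closes correctly and matches the standard proof of the cited result.
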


Hartke, Kolb, Nishikawa, and Stolee \cite{HKNS} proved that any ordered pair of finite groups can be represented by a graph and a vertex-deleted subgraph.
Their work was motivated by consequences to the Reconstruction Conjecture (see Bondy \cite{Bondy}) and isomorph-free generation (see McKay \cite{McKay}).

\begin{theorem}[Hartke, Kolb, Nishikawa, Stolee \cite{HKNS}]\label{theorem:vertexall}
	Let $\Gamma_0$ and $\Gamma_1$ be finite groups.
	There exists a graph $G$ and a vertex $v \in V(G)$ such that $\Aut(G) \cong \Gamma_0$ and $\Aut(G-v) \cong \Gamma_1$.
\end{theorem}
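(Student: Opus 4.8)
The plan is to build $G$ from two ingredients supplied by Frucht's theorem: a graph $G_0$ with $\Aut(G_0)\cong\Gamma_0$ and a graph $G_1$ with $\Aut(G_1)\cong\Gamma_1$, and then to glue many decorated copies of $G_1$ onto a skeleton built from $G_0$ so that a single deletion flips the symmetry from $\Gamma_0$ to $\Gamma_1$. Before designing anything, I would record the constraint that governs the whole argument. If the deleted vertex $v$ were fixed by every automorphism of $G$, then restriction to $G-v$ would give an injective homomorphism $\Aut(G)\hookrightarrow\Aut(G-v)$, forcing $\Gamma_0$ to embed into $\Gamma_1$; since the theorem must handle arbitrary pairs (for instance $\Gamma_0=S_5$ and $\Gamma_1=1$), $v$ cannot be fixed. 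Thus $v$ must lie in a nontrivial orbit of $\Aut(G)\cong\Gamma_0$, and the natural choice is a free (regular) orbit, so that $\Stab_{\Aut(G)}(v)$ is trivial and deleting $v$ destroys the transitivity that carried the $\Gamma_0$-action.

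This observation dictates the architecture. First I would construct a \emph{lockable copy}: take $G_1$, adjoin a hub vertex that is fixed by $\Aut(G_1)$ (through which the copy will attach to the rest of $G$), give every vertex of $G_1$ an identical pendant path of some large length $N$, and introduce one extra ``key'' vertex adjacent to the $j$-th interior vertex of the path hanging from the $j$-th vertex of $G_1$. With the key present, the differing branch positions distinguish all vertices of $G_1$, so the copy is rigid; with the key deleted, the pendant paths become mutually identical, the distinguishing data vanishes, and the copy recovers automorphism group $\Gamma_1$ (the hub stays fixed, so the recovered action extends to whatever is attached at the hub). Crucially, all of the ``locking'' edges emanate from the single key vertex, which is exactly what lets one deletion dissolve one lock completely.

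I would then form the skeleton by taking a Frucht realization of $\Gamma_0$ in its Cayley/regular form, so that $\Gamma_0$ permutes $|\Gamma_0|$ vertex-slots freely and transitively, and attach one lockable copy at each slot through its hub. In $G$ every key is present, so every copy is an identical rigid decoration; since attaching identical rigid gadgets to all vertices of a graph preserves its automorphism group, this yields $\Aut(G)\cong\Gamma_0$. I then let $v$ be the key of one designated copy. Deleting $v$ unlocks precisely that copy, turning it into a structure with automorphism group $\Gamma_1$ while leaving the other copies rigid; because the unlocked copy is now structurally distinct from the rest it cannot be exchanged with them, and since $\Gamma_0$ acts on the slots with trivial point-stabilizer, no nontrivial slot-permutation survives. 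Hence every slot is fixed, the only surviving symmetry is the $\Gamma_1$-action inside the unlocked copy, and $\Aut(G-v)\cong\Gamma_1$.

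The main obstacle is verifying that the two mechanisms do not interfere and that no accidental automorphisms appear. Concretely, I would check the branch-position calculation showing the lock is tight (an automorphism permuting the vertices of $G_1$ by a nontrivial $\sigma$ would have to send a key-neighbour at path-position $j$ to a non-neighbour at position $\sigma(j)$, a contradiction), confirm that choosing disjoint degree ranges for the skeleton, the hubs, the $G_1$-vertices, and the pendant paths forces every automorphism to respect the copy/skeleton decomposition, and confirm that the fixedness of the hub lets the freed $\Gamma_1$-action extend without disturbing the skeleton. The conceptual heart, however, is the orbit observation above: replicating the $\Gamma_1$-gadget $|\Gamma_0|$ times and permuting the copies by the regular $\Gamma_0$-action is precisely what simultaneously places $v$ in a free orbit (so that its removal annihilates the $\Gamma_0$-symmetry) and confines the $\Gamma_1$-lock to a single copy (so that its removal revives exactly one $\Gamma_1$).
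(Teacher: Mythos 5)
Your proposal is correct and follows essentially the same route as the source: your ``lockable copy'' with its ``key'' vertex is exactly the gadget of Lemma~\ref{lemma:trivialtoall} (rigid $H$ with $\Aut(H-x)\cong\Gamma_1$), your free-orbit observation is Lemma~\ref{lemma:stabilizedgamma}, and replicating the gadget over a regular $\Gamma_0$-orbit and deleting one key is precisely the $k=\ell=1$ case of the construction in the proof of Theorem~\ref{thm:generalized3}. The verification obligations you flag (degree separation, unique extension of automorphisms across rigid gadgets, the stabilized hub) are the same ones the paper discharges.
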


There are two natural extensions of this process to a sequence $\Gamma_0, \Gamma_1, \dots, \Gamma_k$ of finite groups using two types of vertex deletions: single deletions or iterated deletions.

\begin{question}\label{thm:generalized1}
	Let $\Gamma_0, \Gamma_1,\dots, \Gamma_k$ be finite groups.
	Does there exist a graph $G$ with vertices $v_1, \dots, v_k \in V(G)$ such that $\Aut(G) \cong \Gamma_0$ and for all $i \in \{1,\dots, k\}$,
	\begin{enumerate}
	\item (Single Deletions) $\Aut(G - v_i) \cong \Gamma_i$?
	\item (Iterated Deletions) $\Aut(G - v_1 - \cdots - v_i) \cong \Gamma_i$?
	\end{enumerate}
\end{question}	

In fact, both of these types of deletions can be combined in an even more general situation, posed as the \emph{vertex deletion game} between a player and an adversary:

\begin{quote}
	\begin{center}
		\bf The Vertex Deletion Game
	\end{center}
	
	{\bf Round 0:} 

		{\it Adversary}: Selects finite groups $\Gamma_0$, $\Gamma_1$, $\dots$, $\Gamma_k$, and a number $\ell \geq 1$.
		
		{\it Player}: Constructs a graph $G_0$ with $\Aut(G_0) \cong \Gamma_0$.
		
	{\bf Round $j$:}\qquad ($1 \leq j \leq \ell$)
	
		{\it Adversary}: Selects a group $\Gamma_{i_j} \in \{\Gamma_1,\dots,\Gamma_k\}$. 
		
		{\it Player}: Selects a vertex $v_j \in V(G_{j-1})$, defines $G_j = G_{j-1} - v_j$, and asserts $\Aut(G_j) \cong \Gamma_{i_j}$.
\end{quote}

Note that this game generalizes both single deletions (play the game with $\ell = 1$) and iterated deletions (play the game with $\ell = k$, and the adversary selects $\Gamma_{i_j} = \Gamma_j$ for all $j \in \{1,\dots, k\}$). 
By carefully constructing $G_0$, the player can survive $\ell$ rounds against the adversary.

\begin{theorem}[Adversarial Iterated Deletions]\label{thm:generalized3}
	Suppose the adversary selects $\Gamma_0$, $\Gamma_1$, $\dots$, $\Gamma_k$ as finite groups and integer $\ell \geq 1$ in Round 0.
	The player can construct a graph $G_0$ with $\Aut(\Gamma_0)$ so that the assertions $\Aut(G_j) \cong \Gamma_{i_j}$ hold for all $\ell$ remaining rounds.
\end{theorem}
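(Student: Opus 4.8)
The plan is to separate the problem into a purely strategic reduction and a single structural construction. First I would observe that the player never needs to remember anything beyond the current graph: the only requirement after round $j$ is $\Aut(G_j) \cong \Gamma_{i_j}$ together with the ability to answer whatever the adversary asks next. This motivates an inductive property. Call a graph $H$ \emph{$r$-ready} (with respect to the fixed target list $\Gamma_1,\dots,\Gamma_k$) if either $r = 0$, or for every $m \in \{1,\dots,k\}$ there is a vertex $w_m \in V(H)$ with $\Aut(H - w_m) \cong \Gamma_m$ and $H - w_m$ again $(r-1)$-ready. Given a graph $G_0$ with $\Aut(G_0) \cong \Gamma_0$ that is $\ell$-ready, the player's strategy is immediate: maintain the invariant that after round $j$ the current graph is $(\ell - j)$-ready, and in round $j+1$ answer the request $\Gamma_{i_{j+1}}$ by deleting the guaranteed vertex $w_{i_{j+1}}$. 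Thus Theorem~\ref{thm:generalized3} reduces to the statement that for any finite groups $\Gamma_0,\Gamma_1,\dots,\Gamma_k$ and any $\ell \geq 1$ there exists an $\ell$-ready graph realizing $\Gamma_0$.

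Second, I would build such a graph by induction on $r$, realizing for each group $\Gamma$ and each $r \in \{0,\dots,\ell\}$ a graph $H(\Gamma,r)$ with $\Aut(H(\Gamma,r)) \cong \Gamma$ that is $r$-ready, and arranging moreover that the guaranteed deletion satisfies $H(\Gamma,r) - w_m \cong H(\Gamma_m, r-1)$. The base case $r=0$ is Frucht's theorem \cite{Fru39}, in a rigid form that realizes each group on a gadget carrying a distinguished substructure fixed by every automorphism, so that it can be attached to other gadgets without introducing spurious symmetry. For the inductive step, the idea is to take a rigid core realizing $\Gamma$ and, for each $m$, attach a \emph{dormant} copy of $H(\Gamma_m, r-1)$ together with a single \emph{activation vertex} $w_m$. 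The attachment is designed, in the spirit of Theorem~\ref{theorem:vertexall}, so that while $w_m$ is present the $m$-th copy is asymmetric and contributes nothing and the whole graph has automorphism group exactly $\Gamma$, whereas deleting $w_m$ simultaneously suppresses the core's symmetry and ``switches on'' the $m$-th copy, leaving a graph isomorphic to $H(\Gamma_m, r-1)$ with its own recursively guaranteed future deletions.

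The whole object is finite: the underlying decision tree has depth $\ell$ and branching factor $k$, hence $1 + k + \cdots + k^\ell = (k^{\ell+1}-1)/(k-1)$ nodes, and each node contributes a gadget of bounded size. The sibling and ancestor gadgets that are not activated remain physically present throughout the game as inert, rigidly attached pieces; to keep them from polluting the automorphism group I would make them pairwise structurally distinguishable, for instance by appending asymmetric ``identifier'' gadgets of distinct shapes, so that no automorphism can map one dormant piece onto another and each contributes only the identity.

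The hard part will be exactly this control of spurious automorphisms, which is where the essential content of Theorem~\ref{theorem:vertexall} is reused and amplified. I must verify two things at every node and for every deletion: first, that with all activation vertices of a node present the automorphism group is \emph{exactly} the intended $\Gamma$ and not something larger arising from interactions among the $k$ dormant copies or between a copy and the core; and second, that deleting a single activation vertex $w_m$ performs the intended dual action, killing the core's symmetry while turning on precisely $\Gamma_m$, uniformly across all $k$ branches and all depths $r$, and without disturbing the inert gadgets left over from earlier rounds. Engineering a single vertex deletion to accomplish this two-sided switch, while the rigid identifier gadgets guarantee that everything else stays inert, is the technical crux; once the switching gadget of Theorem~\ref{theorem:vertexall} is set up to interface cleanly with the recursive copies, the induction closes and the strategy from the first step finishes the proof.
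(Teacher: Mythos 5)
Your high-level architecture matches the paper's: a tree of gadgets of depth $\ell$ and branching factor $k$, one gadget per (round, group) pair, with the player walking down one branch per round. Your ``$r$-ready'' invariant is a clean reformulation of what the paper's construction achieves implicitly. However, there is a genuine gap exactly at the point you flag as ``the technical crux,'' and the sketch you give for it does not work. You propose attaching, for each $m$, a \emph{single} dormant copy with a \emph{single} activation vertex $w_m$ to ``a rigid core realizing $\Gamma$,'' such that deleting $w_m$ both suppresses the core's symmetry and activates $\Gamma_m$. But the core is not rigid --- it has automorphism group $\Gamma$ --- so a single attached appendage faces a dichotomy: either its attachment locus is invariant under all of $\Gamma$ (in which case deleting $w_m$ cannot distinguish anything in the core and so cannot kill $\Gamma$), or it is not invariant (in which case the appendage already breaks the symmetry before any deletion, and $\Aut(G_0)\not\cong\Gamma$). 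The switching gadget of Theorem~\ref{theorem:vertexall} only supplies the ``on'' half of the switch (deleting a rigidifying vertex to reveal a group); the ``off'' half requires a new idea that your proposal does not contain.

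The paper's resolution is worth stating because it is the whole content of the argument. Lemma~\ref{lemma:trivialtoall} produces each gadget $H$ with \emph{two} marked vertices: $x$, whose deletion reveals $\Gamma$, and $y$, a vertex of $H-x$ with trivial stabilizer. The next level of gadgets is then attached not to one vertex but to \emph{every vertex of the orbit of $y$} under $\Aut(H-x)$, identically, so that the symmetry $\Gamma$ is preserved while dormant. When the player deletes the $x$-vertex of the gadget hanging off $y$ (and only that one), the bundle at $y$ becomes non-isomorphic to the bundles at the other orbit members; every automorphism must therefore fix $y$, and since $\Stab_{H-x}(y)$ is trivial the previous symmetry collapses entirely, while the newly opened gadget contributes exactly $\Gamma_{i_j}$. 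Your plan to make dormant pieces ``pairwise structurally distinguishable'' points in the wrong direction here: the copies spread across an orbit must be mutually \emph{indistinguishable} before the deletion, or the symmetry you are supposed to be exhibiting is destroyed prematurely. Without the trivial-stabilizer vertex and the orbit-replication trick, your induction does not close.
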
	

Instead of using the vertex deletion game, there is an equivalent statement of the previous theorem using a sequence of alternating quantifiers.

\begin{theorem}[Adversarial Iterated Deletions; alternate form]
	For all numbers $k, \ell \geq 1$ and finite groups $\Gamma_0$, $\Gamma_1$, $\dots$, $\Gamma_k$, there exists a graph $G_0$ such that $\Aut(G_0) \cong \Gamma_0$ and
	\[
		\forall i_1\, \exists v_1\, \forall i_2\, \exists v_2\, \cdots \forall i_\ell\, \exists v_\ell\, \forall j,\, \Aut(G_0 - v_1 - \cdots - v_j) \cong \Gamma_{i_j},
	\]
	where the domain of $j$ is $\{1,\dots, \ell\}$, the domain of each $i_j$ is $\{1,\dots, k\}$, and the domain of each $v_j$ is $V(G_0) \setminus \{v_1,\dots, v_{j-1}\}$.
\end{theorem}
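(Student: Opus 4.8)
The game form and the quantifier form are equivalent: a winning strategy for the player is exactly a choice of Skolem functions $v_j = v_j(i_1,\dots,i_j)$ witnessing the alternating quantifier block, so it suffices to exhibit such a strategy. The plan is to prove this by induction on the number of rounds $\ell$, after reformulating the strategy as a \emph{deletion tree}. View the adversary's play as a root-to-leaf path in the complete $k$-ary tree $T$ of depth $\ell$: the node $\sigma = (i_1,\dots,i_j)$ should carry a graph whose automorphism group is $\Gamma_{i_j}$ (with the root carrying $\Gamma_0$), and the edge from $\sigma$ to its child $\sigma i$ should correspond to deleting one vertex. Because deletions physically remove vertices from the single graph $G_0$, all of these node-graphs must appear as nested induced subgraphs of $G_0$; embedding the whole tree inside one graph is what we must arrange.

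First I would set up a self-reproducing inductive hypothesis strong enough to recurse. Say a graph $H$ is \emph{$\ell$-ready for $\Gamma$} (against the fixed family $\Gamma_1,\dots,\Gamma_k$) if $\Aut(H)\cong\Gamma$, $H$ carries a rigid ``handle'' fixed by every automorphism (to allow controlled gluing and inert padding), and either $\ell = 0$, or $\ell\ge 1$ and for each $i\in\{1,\dots,k\}$ there is a vertex $u_i$ with $H - u_i$ itself $(\ell-1)$-ready for $\Gamma_i$. The base case $\ell=0$ is Frucht's theorem together with a standard rigid gadget providing the handle. The content is the inductive step, which I would isolate as a \emph{branching lemma}: given graphs $H_1,\dots,H_k$ that are $(\ell-1)$-ready for $\Gamma_1,\dots,\Gamma_k$ respectively, and a target group $\Gamma_0$, construct a graph $G$ that is $\ell$-ready for $\Gamma_0$ with branching vertices $u_1,\dots,u_k$ satisfying $G-u_i\cong H_i$ up to inert padding. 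Applying this lemma recursively, from the leaves of $T$ upward, builds $G_0$ with the entire depth-$\ell$ deletion tree realized as nested induced subgraphs, which is precisely a winning strategy.

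To prove the branching lemma I would build $G$ from a rigid, asymmetric \emph{frame} carrying $k$ uniquely distinguishable ports, attach at port $i$ a copy of $H_i$ through an asymmetric connector, and install $k$ \emph{control vertices} $u_1,\dots,u_k$ whose presence ``masks'' the individual symmetries of the $H_i$ and whose joint effect synthesizes exactly $\Gamma_0$; deleting $u_i$ then unmasks $H_i$ at its port while leaving the rest of the frame rigid, so the surviving graph is $H_i$ (padded). This is a multi-target generalization of the construction behind Theorem \ref{theorem:vertexall}, which is the $k=1$ instance. Two design constraints must be met throughout: every control vertex $u_i$ must be a fixed point of $\Aut(G)$, so that the adversary's selection of $\Gamma_i$ maps to a well-defined unique deletion; and the handles and frame must be rigid enough that padding the various $H_i$ to a common size introduces no spurious automorphisms and no accidental isomorphisms between different $G - u_i$.

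The main obstacle is the simultaneity in the branching lemma: forcing $k$ distinct single-vertex deletions of one rigid graph to yield $k$ prescribed, differently-symmetric outcomes while pinning $\Aut(G)$ to $\Gamma_0$ exactly. The HKNS construction already balances one such deletion against the ambient group; the difficulty here is non-interference, so that the $k$ control gadgets coexist without any one of them creating symmetries that either enlarge $\Aut(G)$ or survive into the wrong $G-u_i$. I expect the resolution to be a highly rigid, individually-labeled frame (so that the ports, connectors, and control vertices each lie in a singleton orbit), combined with size-padding of the $H_i$ by inert rigid trees attached at their handles; verifying that the only automorphisms of $G$ are the intended $\Gamma_0$-symmetries, and that each $G-u_i$ recovers $H_i$ with its own fresh set of $k$ control vertices for the next round, is the calculation that carries the real weight.
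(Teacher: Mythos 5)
Your reduction of the quantifier form to a strategy (Skolem functions realized as a deletion tree) is fine, and the overall shape of your plan --- recursively nested ``masked'' gadgets, one per group, unmasked by a single deletion --- is the right instinct and close in spirit to the paper. But one of your two stated design constraints is not merely delicate; it is impossible, and it sinks the branching lemma as formulated. You require each control vertex $u_i$ to be a fixed point of $\Aut(G)$. For any graph $G$ and any vertex $v$, the restriction map $\Stab_G(v) \to \Aut(G - v)$ is an injective group homomorphism; if $v$ is fixed by every automorphism, then $\Stab_G(v) = \Aut(G)$, so $\Gamma_0 \cong \Aut(G)$ would have to embed into $\Aut(G - u_i) \cong \Gamma_i$ for every $i$. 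This already fails for $\Gamma_0 = \mathbb{Z}_2$ and $\Gamma_1$ trivial. Worse, the obstruction recurs at every level of your induction: the next-round control vertices inside the unmasked $H_i$ would have to be fixed by $\Aut(G - u_i) \cong \Gamma_i$, forcing $\Gamma_i \hookrightarrow \Gamma_{i'}$, and so on down the tree.

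The paper resolves this tension in the opposite way: the vertex to be deleted is placed in an orbit on which the current automorphism group acts with \emph{trivial stabilizer} (this is the point of Lemma \ref{lemma:stabilizedgamma} and of the vertex $y$ in Lemma \ref{lemma:trivialtoall}), so the embedding $\Stab_G(v) \hookrightarrow \Aut(G-v)$ imposes no constraint. Well-definedness of the player's move is recovered not by fixing the vertex but by symmetrizing the construction: an identical bundle of $k$ rigid gadgets (each with $\Aut(H_i)$ trivial but $\Aut(H_i - x_i) \cong \Gamma_i$) is attached to \emph{every} vertex of the distinguished orbit, so the pre-deletion symmetry is preserved and all orbit representatives yield isomorphic results. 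Deleting the reveal-vertex $x_{i_j}$ in the one gadget hanging off the tracked orbit vertex $u_{j-1}$ then does two jobs at once: it reveals $\Gamma_{i_j}$, and it distinguishes $u_{j-1}$ from the rest of its orbit, which kills the previous group precisely because that group acted freely there. If you replace your fixed-point requirement with this trivial-stabilizer-plus-orbit-replication mechanism, your inductive scheme becomes essentially the paper's proof; as written, the branching lemma cannot be proved.
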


A group is \emph{trivial} if it consists only of the identity element.
For a graph $G$ and vertex $v \in V(G)$, the \emph{stabilizer} of $v$ in $G$, denoted $\Stab_G(v)$,
	is the subgroup of $\Aut(G)$ given by permutations $\tau$ where $\tau(v) = v$.

Our starting point is the following lemma from \cite{HKNS}.
	
\begin{lemma}[Hartke, Kolb, Nishikawa, Stolee {\cite[Lemma 2.2]{HKNS}}]\label{lemma:stabilizedgamma}
	For any finite group $\Gamma$, there is a connected graph $G$ 
		and a vertex $v \in V(G)$
		where $\Aut(G) \cong \Gamma$ and 
		$\Stab_G(v)$ is trivial.
\end{lemma}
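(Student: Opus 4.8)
The plan is to realize $\Gamma$ not merely as an abstract automorphism group, but as a group acting \emph{regularly} (freely and transitively) on a distinguished orbit of vertices; this is exactly what the Cayley-graph proof of Frucht's theorem delivers. The connection to the statement is the orbit--stabilizer relation: for any vertex $v$ one has $|\Aut(G)\cdot v|\cdot|\Stab_G(v)| = |\Aut(G)|$, so $\Stab_G(v)$ is trivial precisely when $v$ lies in an orbit of full size $|\Gamma|$ on which $\Gamma$ acts freely. Thus it suffices to produce a connected graph realizing $\Gamma$ that possesses such a free orbit, and then take $v$ to be any vertex in it.

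First I would fix a generating set $S$ of $\Gamma$ and form the Cayley color digraph $D$ on vertex set $\Gamma$, placing an arc of color $s$ from $g$ to $gs$ for each $g \in \Gamma$ and each $s \in S$. The color- and direction-preserving automorphisms of $D$ are exactly the left translations $x \mapsto hx$, which constitute a copy of $\Gamma$ acting regularly on the vertices; in particular every vertex already has trivial stabilizer in $D$, and because $S$ generates $\Gamma$ the digraph $D$ is connected.

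Next I would apply the standard Frucht gadget replacement to turn $D$ into a connected simple undirected graph $G$. Each arc is replaced by a small asymmetric gadget encoding both its color (which generator $s$) and its orientation (tail versus head); the gadgets for distinct colors are mutually non-isomorphic and individually rigid, and they are attached so as to be distinguishable from the original ``group vertices'' (for instance by degree). This guarantees that $\Aut(G)\cong\Gamma$ with the group vertices forming a single free orbit, from which the lemma follows: an automorphism $\tau$ fixing a group vertex $v$ restricts to an element of the regular $\Gamma$-action fixing a point, hence to the identity on all group vertices, and then rigidity forces $\tau$ to fix every gadget as well, so $\Stab_G(v)$ is trivial. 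The degenerate cases are handled directly, taking $G$ to be a single vertex when $\Gamma$ is trivial and checking the few smallest groups (or substituting an explicit small asymmetric graph) by hand.

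The main obstacle is the third step: verifying that the gadget replacement faithfully preserves the regular representation. One must ensure that the gadgets introduce no unintended automorphisms, so that no symmetry beyond $\Gamma$ appears, while remaining rigid enough that an automorphism fixing the group vertices cannot permute any gadget vertices nontrivially. This is the familiar but delicate bookkeeping at the heart of Frucht-type arguments; once the free orbit is secured, the remainder of the proof is routine.
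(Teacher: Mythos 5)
Your proposal is correct, and it is essentially the argument behind the cited result: this paper does not prove Lemma~\ref{lemma:stabilizedgamma} itself but imports it from \cite{HKNS}, where the proof likewise rests on Frucht's Cayley-graph construction and the observation that the group vertices form a regular orbit, so any one of them has trivial stabilizer. The one piece of bookkeeping you flag --- that the rigid, pairwise non-isomorphic arc gadgets introduce no automorphisms beyond the left translations and that small groups need separate treatment --- is exactly the standard Frucht-type verification and is not a gap.
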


We now describe a gadget which will be used to build the full construction for Theorem \ref{thm:generalized3}.

\begin{lemma}\label{lemma:trivialtoall}
	Let $\Gamma$ be a finite group. 
	There exists a graph $H$ and two vertices $x, y  \in V(H)$
		so that
		$\Aut(H)$ is trivial, 
		$H-x$ is connected, 
		$\Aut(H-x) \cong \Gamma$, 
		and $\Stab_{H-x}(y)$ is trivial.
\end{lemma}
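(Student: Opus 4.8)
The plan is to take for $H$ a mild modification of the graph supplied by Lemma~\ref{lemma:stabilizedgamma} together with one extra vertex $x$, and to set $y$ equal to the distinguished vertex of that lemma. Concretely, let $G$ be connected with $\Aut(G) \cong \Gamma$ and $\Stab_G(v)$ trivial. The idea is that $x$ will be attached so asymmetrically that $\Aut(H)$ is trivial, yet the deletion of $x$ restores the full symmetry: if I arrange $H - x$ to be (a symmetry-preserving modification of) $G$ with $y = v$, then three of the four requirements — connectivity of $H - x$, $\Aut(H-x) \cong \Gamma$, and triviality of $\Stab_{H-x}(y)$ — hold immediately. Everything therefore reduces to choosing the neighborhood of $x$ so that $\Aut(H)$ is trivial.

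To make the rigidity argument clean I would first replace $G$ by its subdivision $G'$, obtained by subdividing every edge once. For connected $G$ other than a cycle this preserves the automorphism group, so $\Aut(G') \cong \Aut(G) \cong \Gamma$, keeps $G'$ connected, and keeps $\Stab_{G'}(v)$ trivial, while forcing the maximum degree of $G'$ to be small: writing $N' = |V(G')|$, one has $\Delta(G') = \max(\Delta(G),2) \le N' - 3$ as soon as $G$ has at least two edges. Now form $H$ by adding a single vertex $x$ adjacent to \emph{every} vertex of $G'$ except $v$, and take $y = v$ and $H - x = G'$. Then $\deg_H(x) = N' - 1$, whereas every vertex $w \in V(G')$ satisfies $\deg_H(w) \le \Delta(G') + 1 \le N' - 2$; thus $x$ is the \emph{unique} vertex of maximum degree in $H$.

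The automorphism analysis then runs in two short steps. Because $x$ is the unique vertex of its degree, every $\psi \in \Aut(H)$ fixes $x$, and hence restricts to an automorphism of $G' = H - x$ that preserves the set $N_H(x) = V(G') \setminus \{v\}$ setwise — equivalently, it fixes $v$. Such a restriction therefore lies in $\Stab_{G'}(v)$, which is trivial, so $\psi$ is the identity and $\Aut(H)$ is trivial, as required.

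The step I expect to carry the real content is the one just exploited: forcing $x$ to be a fixed point of $\Aut(H)$ while its neighborhood still has trivial setwise stabilizer. These two demands pull in opposite directions — a large, ``balanced'' neighborhood makes the degree of $x$ distinctive but tends to be symmetric, while a small asymmetric neighborhood need not give $x$ a distinctive degree. Passing to the subdivision is precisely what reconciles them, by opening a gap between $\Delta(G')$ and $|V(G')|$ so that the asymmetric co-singleton neighborhood $V(G') \setminus \{v\}$ is simultaneously rigid and of record degree; the one genuine fact to verify is that subdivision preserves the automorphism group for the graphs in play. Finally, the hypotheses $|E(G)| \ge 2$ and ``$G$ is not a cycle'' exclude only a few small groups $\Gamma$ (such as the trivial group and $\mathbb{Z}_2$), which I would dispose of by exhibiting an explicit small graph $H$ directly.
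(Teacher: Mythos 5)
Your proof is correct, but it rigidifies $H$ by a genuinely different mechanism than the paper. The paper attaches a pendant path of length $\lceil\log n\rceil$ to every vertex of the graph $G$ from Lemma~\ref{lemma:stabilizedgamma} and joins $x$ to the path vertices according to the binary expansion of each vertex's index; the vertex $x$ is then forced by its large degree, and its neighborhood \emph{individualizes every vertex of $G$}, so $\Aut(H)$ is trivial without ever invoking the triviality of $\Stab_G(y)$ (that hypothesis is used only for the $H-x$ conclusions). You instead subdivide every edge to open a gap between $\Delta$ and the number of vertices, make $x$ adjacent to everything except $v$, and individualize \emph{only} $v$ --- relying on the triviality of $\Stab_{G'}(v)$ a second time to conclude that $\Aut(H)$ is trivial. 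Both routes are sound: your degree and bipartition arguments check out (in $S(G)$ an automorphism can swap original and subdivision vertices only when all original degrees equal $2$, i.e.\ $G$ is a cycle, and the class-preserving automorphisms restrict bijectively to $\Aut(G)$), and the construction is leaner, adding one vertex to the subdivision rather than a logarithmic-length path per vertex. What the paper's version buys is that it needs nothing from $G$ beyond connectivity, whereas yours leans on the distinguished vertex $v$; what yours buys is economy and a cleaner rigidity argument. Two small remarks: the ``$G$ is a cycle'' exception is actually vacuous here, since a cycle on at least three vertices has no vertex with trivial stabilizer and so can never be the graph supplied by Lemma~\ref{lemma:stabilizedgamma}; and the remaining exceptional groups (trivial and $\mathbb{Z}_2$, where $G$ might be $K_1$ or $K_2$) do need the explicit small graphs you promise, so a complete write-up should exhibit them.
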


\begin{proof}
	By Lemma \ref{lemma:stabilizedgamma}, there exists a graph $G$ and a vertex $y \in V(G)$ so that $\Aut(G) \cong \Gamma$ and $\Stab_G(y)$ is trivial.
	We shall add vertices to $G$ to form $H$ with trivial automorphism group and a vertex $x \in V(H)$ 	so that the automorphisms of $H - x$ are extensions of automorphisms of $G$ and hence $\Stab_{H-x}(y)$ is trivial.
	
	Let $n = |V(G)|$, $t = \lceil\log n\rceil$, and order the vertices of $G$ as $v_1, \dots, v_n$.
	Label the vertices of a path of order $t+1$ as $u_0, u_1, \dots, u_t$.
	For every vertex $v_j \in V(G)$, let $u_0^{(v_j)}, u_1^{(v_j)}, \dots, u_t^{(v_j)}$ be a copy of this path and identify $u_0^{(v_j)}$ and $v_j$.
	
	Finally, add a vertex $x$ which is adjacent to $v_j$ for all $j \in \{1,\dots, n\}$
		and adjacent to $u_i^{(v_j)}$ if and only if the $i$th bit of the binary expansion of $j$ is equal to $1$.
	Call the resulting graph $H$.
	
	The vertex $x$ is the only vertex of degree at least $2n$, so it is stabilized under automorphisms of $H$.
	However, every vertex $v_j$ in $V(G)$ is identified by which vertices in the path $u_1^{(v_j)},\dots, u_t^{(v_j)}$ are adjacent to $x$.
	This stabilizes every vertex in $V(G)$ and hence every vertex of $H$.
	Thus, $\Aut(H)$ is trivial and $H-x$ is connected.

	The vertex-deleted graph $H-x$ is given by the graph $G$ with a path of order $n+1$ attached to every vertex.
	Observe that $V(G)$ is set-wise stabilized under automorphisms of $H-x$ and the automorphisms of $G$ naturally extend to automorphisms of $H - x$.
	Thus, $\Aut(H-x) \cong \Aut(G) \cong \Gamma$ and $\Stab_{H-x}(y)$ is trivial.
\end{proof}

We are now sufficiently prepared to prove the main theorem.
The gadget from Lemma \ref{lemma:trivialtoall} has two purposes:
\begin{enumerate}
	\item ``Reveal'' symmetry: When $x$ is deleted, the automorphism group $\Gamma$ is revealed.
	\item ``Remove'' symmetry: When $y$ is stabilized within $H - x$, all non-trivial automorphisms of $H-x$ are removed.
\end{enumerate}
Our construction for the graph $G_0$ carefully places many copies of this gadget in such a way that the player has access to a ``revealing'' vertex ($x$) that simultaneously stabilizes the ``removing" vertex ($y$) in the previous gadget.
Therefore, we have a sequence of deletions which remove all previous symmetry and reveal only the requested symmetry.

\begin{proof}[Proof of Theorem \ref{thm:generalized3}]
	Note that the case $k = \ell = 1$ holds by Theorem \ref{theorem:vertexall}.
	We assume that the groups $\Gamma_1, \dots, \Gamma_k$ are distinct with respect to isomorphism.
		
	By Lemma \ref{lemma:trivialtoall}, for every $i \in \{0,1,\dots,k\}$ there is a graph $H_i$ with vertices $x_i, y_i \in V(H_i)$ such that $\Aut(H_i)$ is trivial, $\Aut(H_i - x_i) \cong \Gamma_i$, and $\Stab_{H_i-x_i}(y_i)$ is trivial.
	For all $i \in \{0,\dots,k\}$, let $O_i$ be the orbit of $y_i$ in $H_i - x_i$.
	Since the groups $\Gamma_1, \dots, \Gamma_k$ are pairwise non-isomorphic, the graphs $H_1,\dots, H_k$ are pairwise non-isomorphic, as are the graphs $H_1 - x_1, \dots, H_k - x_k$.
	
	We construct the graph $G_0$ by building graphs $F_0, F_1, \dots, F_\ell$ iteratively.
	Let $F_0 = H_0 - x_0$ and $U_0 = O_0$.
	For all $j \in \{0,\dots, \ell-1\}$, we will build $F_{j+1}$ from $F_j$.

	Consider each $j \geq 0$.
	For all vertices $v \in U_j$ and $i \in \{1,\dots,k\}$, create a copy $H_i^{(j+1,v)}$ of $H_i$ and add edges from $v$ to each vertex of $H_i^{(j+1,v)}$.
	Let $x_{i}^{(j+1,v)}$ and $y_{i}^{(j+1,v)}$ denote the copies of $x_{i}$ and $y_{i}$ in $H_{i}^{(j+1,v)}$.
	Let $O_i^{(j+1,v)}$ be the copy of $O_i$ within $H_i^{(j+1,v)}$ and define $U_{j+1} = \displaystyle\cup_{v \in U_j} \displaystyle\cup_{i=1}^k O_i^{(j+1,v)}$.

	Now, add a path $a_0, \dots, a_\ell$ to $F_\ell$, and add edges such that $a_j$ is adjacent to all vertices in $V(F_j) \setminus V(F_{j-1})$ (the vertex $a_0$ is adjacent to $V(F_0)$).
	Call the resulting graph $G_0$.
	
	Observe that each vertex $a_j$ is distinguished by its degree (the sizes of the sets $V(F_j)$ increase geometrically as $j$ increases).
	Therefore, every set $U_j$ is identified in $G_0$, and therefore set-wise stabilized.
	In particular, $U_0$ is set-wise stabilized, so any automorphisms of $G_0$ must preserve $U_0$ and similarly $H_0-x_0$.

	\begin{claim}\label{claim:induction}
		Fix $j \geq 0$ and $X \subseteq V(G_0)$. 
		If $X \subseteq V(F_j)$, then $\Aut(G_0 - X) \cong \Aut(F_j - X)$.
	\end{claim}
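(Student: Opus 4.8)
The plan is to show that the obvious restriction map is a group isomorphism. Using the nested filtration $V(F_0)\subseteq V(F_1)\subseteq\cdots\subseteq V(F_\ell)\subseteq V(G_0)$ together with the path $a_0,\dots,a_\ell$, I would prove that every automorphism of $G_0-X$ preserves $V(F_j)\setminus X$ set-wise, and hence (since $F_j$ is the induced subgraph of $G_0$ on $V(F_j)$) restricts to an automorphism of $F_j-X$; I then argue this restriction map $\rho\colon \Aut(G_0-X)\to\Aut(F_j-X)$ is a bijection.

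First I would pin down the path. Because $X\subseteq V(F_j)$, the levels $V(F_m)\setminus V(F_{m-1})$ with $m>j$ are untouched, so $a_{j+1},\dots,a_\ell$ retain their distinct, geometrically growing degrees and are fixed by every automorphism of $G_0-X$; the remaining $a_j,\dots,a_0$ are then fixed one at a time along the path, anchored at $a_{j+1}$. Since $a_m$ is adjacent to exactly the level-$m$ vertices, each level is set-wise stabilized, and taking the union over $m\le j$ shows $V(F_j)\setminus X$ is preserved. This makes $\rho$ a well-defined homomorphism.

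For injectivity I would argue outward through the levels. If $\sigma\in\Aut(G_0-X)$ restricts to the identity on $F_j-X$, then it fixes each attachment vertex of $U_j\setminus X$ and fixes $a_{j+1}$, hence preserves level $j+1$. The level-$(j+1)$ gadgets are rigid copies of the pairwise non-isomorphic graphs $H_1,\dots,H_k$, each coned to a single fixed attachment vertex, so $\sigma$ carries each gadget to itself; triviality of $\Aut(H_i)$ (Lemma \ref{lemma:trivialtoall}) then forces $\sigma$ to fix it pointwise, which fixes $U_{j+1}$. Iterating up to level $\ell$, with the path already fixed, shows $\sigma$ is the identity.

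Surjectivity is where the real difficulty lies, and it is the main obstacle. Given $\tau\in\Aut(F_j-X)$ I would extend it level by level, sending the type-$i$ gadget hanging off $v\in U_j\setminus X$ to the type-$i$ gadget hanging off $\tau(v)$ by the unique isomorphism between these rigid copies, propagating outward and fixing the path. For this to be defined and edge-preserving, $\tau$ must carry $U_j\setminus X$ onto itself and must respect the level filtration within $F_j$; a priori an adversarial deletion $X\subseteq V(F_j)$ could create accidental symmetries of $F_j-X$ that move an attachment vertex off $U_j$ or mix levels, and such a $\tau$ would not lift. I would therefore isolate and prove, by induction on $j$, the structural invariance statement that for every $X\subseteq V(F_j)$ the automorphism group $\Aut(F_j-X)$ preserves each level and each attachment set $U_m$ ($m\le j$): for $j\ge1$ the level-$j$ vertices sit in rigid, pairwise-distinct $H_i$-blocks attached to the inductively recognizable set $U_{j-1}$, so these blocks and the fixed $y_i$-orbits $O_i$ inside them are recoverable intrinsically, while the base case $j=0$ uses the uniform pendant-path structure of $H_0-x_0$ and the trivial stabilizer $\Stab_{H_0-x_0}(y_0)$ from Lemma \ref{lemma:trivialtoall} to pin down $U_0=O_0$. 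Establishing this identifiability \emph{robustly against all deletions} $X$ is the crux; once it is in hand, the extension of $\tau$ preserves every attachment edge and every path edge, so $\rho$ is onto and Claim \ref{claim:induction} follows.
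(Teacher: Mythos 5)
Your architecture is the same as the paper's: a restriction map $\rho\colon\Aut(G_0-X)\to\Aut(F_j-X)$, the path $a_0,\dots,a_\ell$ pinning the level sets, and rigidity of the attached copies of $H_i$ giving unique level-by-level extensions. Your well-definedness and injectivity arguments are fine. The gap is precisely in the step you flag as the crux. The ``structural invariance'' lemma you propose -- that for \emph{every} $X\subseteq V(F_j)$ the group $\Aut(F_j-X)$ preserves each level and each attachment set $U_m$ -- is false, so your surjectivity argument cannot be completed. Concretely, take $\Gamma_0$ trivial, so that $U_0=O_0=\{y_0\}$, let $j=0$, and let $X=V(F_0)\setminus\{y_0,z\}$ for any other vertex $z$ of $F_0=H_0-x_0$. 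Then $F_0-X$ has exactly two vertices and $\Aut(F_0-X)\cong\mathbb{Z}_2$ swaps $y_0$ and $z$, so $U_0$ is not preserved. This $\tau$ genuinely does not lift: in $G_0-X$ the vertex $y_0$ still carries the cones to $H_1^{(1,y_0)},\dots,H_k^{(1,y_0)}$ and hence has degree at least $1+\sum_i|V(H_i)|$, while $z$ has degree at most $2$. So $\rho$ is not surjective for this $X$ (and a three-vertex variant, giving $S_3$ on one side and a group fixing $y_0$ on the other, shows the two automorphism groups need not even be abstractly isomorphic). In other words, your worry was correct and cannot be argued away: the Claim with the quantifier ``for all $X\subseteq V(F_j)$'' is not provable, because it is false.

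For what it is worth, the paper's own proof elides exactly this point: it asserts that every $\sigma\in\Aut(F_{j'}-X)$ extends by sending $V(H_i^{(j'+1,v)})$ to $V(H_i^{(j'+1,\sigma(v))})$ without verifying that $\sigma(v)\in U_{j'}$, i.e., that the target gadget exists. The correct repair is not to prove universal invariance but to weaken the quantifier on $X$: the Claim is only ever invoked for $X=\emptyset$ and for $X=X_j=\{v_1,\dots,v_j\}$ with each $v_m$ a copy of some $x_{i_m}$. For $X=\emptyset$ the set $U_0=O_0$ is an orbit of $\Aut(F_0)$ and is therefore automatically preserved; for $X=X_j$ the later part of the paper's argument shows that $F_{j-1}-X_j$ is rigid and that the only surviving symmetry is $\Aut\bigl(H_{i_j}^{(j,u_{j-1})}-v_j\bigr)\cong\Gamma_{i_j}$, which preserves $O_{i_j}^{(j,u_{j-1})}\subseteq U_j$ and the level structure -- exactly the hypothesis needed to run the extension. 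So the Claim should be stated only for deletion sets $X$ for which $\Aut(F_j-X)$ preserves the levels and $U_j\setminus X$, with that hypothesis verified for the sets actually used; under that restriction your proposed argument (and the paper's) goes through.
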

	
	\begin{proof}[Proof of Claim \ref{claim:induction}:]
		Since the vertices $a_0, \dots, a_\ell$ are not included in $V(F_j)$, they remain stabilized in $G_0 - X$.
		Therefore, the sets $V(F_{j'+1}) \setminus V(F_{j'})$ are set-wise stabilized in $\Aut(G_0 - X)$ for all $j' \in \{j,\dots,\ell-1\}$.
		We show the natural map from $\Aut(F_{j'+1}-X)$ to $\Aut(F_{j'}-X)$ is a bijection for all $j' \in \{j,\dots, \ell-1\}$, implying the natural map between $\Aut(F_{\ell}-X)$ and $\Aut(F_j-X)$ is a bijection.
		
		Every vertex $u \in V(F_{j'+1}-X) \setminus V(F_{j'})$ is contained in $H_i^{(j'+1,v)}$ for some vertex $v \in U_{j'}\setminus X$ and $i \in \{1,\dots,k\}$.
		Since $V(H_i^{(j'+1,v)}) \cap X = \emptyset$, this subgraph $H_i^{(j'+1,v)}$ has no non-trivial automorphisms.
		Therefore, for every automorphism $\sigma$ of $F_{j'} - X$, there is exactly one isomorphism of $F_{j'+1}-X$ that extends $\sigma$ and maps $V(H_{i}^{(j'+1,v)})$ to $V(H_i^{(j'+1, \sigma(v))})$.
		Hence, the action of an automorphism on each vertex $u \in V(F_{j'+1}-X)\setminus V(F_{j'})$ is determined exactly by the action of the automorphism on the vertices within $V(F_{j'}-X)$.
		Hence, the restriction map from $\Aut(F_{j'+1}-X)$ to $\Aut(F_{j'}-X)$ is a bijection, proving the claim.
	\end{proof}
	
	When $X = \emptyset$, the automorphism group of the subgraph $F_0$ determines the automorphism group of $G_0 - X$.
	Since $F_0 \cong H_0-x_0$, then $\Aut(G_0) \cong \Gamma_0$.
	
	We now play the vertex deletion game as the player, and we shall always select the vertex $v_j$ in Round $j$ from $V(F_j) \setminus V(F_{j-1})$, for all $j \in \{1,\dots,\ell\}$.
	First, let $u_0$ be the vertex $y_0$ within $H_0-x_0$.
	
	If the adversary selects $\Gamma_{i_j}$ in Round $j$, we select $v_j$ as the copy of $x_j$ in $H_{i_j}^{(j, u_{j-1})}$.
	Then, we define $u_j$ as the copy of $y_{i_j}$ in $H_{i_j}^{(j, u_{j-1})}$.
	Observe that if $X_j = \{v_1,\dots,v_j\}$, then $X_j \subseteq V(F_j)$.
	Also, each vertex $u_{j'}$, where $j' < j$, is stabilized within $U_{j'}$, since the neighborhood of $u_{j'}$ in $U_{j'+1}$ (isomorphic to $H_{i_{j'}} - x_{i_{j'}}$) is not isomorphic to the similarly restricted neighborhood of any other vertex in $U_{j'} \setminus \{u_{j'}\}$ (which is isomorphic to $H_{i_{j'}}$).
	 In particular, this stabilizes $u_{j'}$ within $H_{i_{j'}}^{(j', u_{j'-1})}$, so no non-trivial automorphisms exists in $F_{j-1} - X_{j}$.
	 However, the subgraph $H_{i_j}^{(j,u_{j-1})}$ has automorphism group isomorphic to $\Gamma_{i_j}$, and each of these automorphisms extend to a unique automorphism of $G_0 - X_j$.
	 Therefore, $\Aut(G_j) = \Aut(G_0-X_j) \cong \Gamma_{i_j}$.
\end{proof}

The construction given in the above proof requires a large number of vertices 
	and vertices of high degree.
While the gadget given by Lemma \ref{lemma:trivialtoall} can be built using $O(|\Gamma|\log^2 |\Gamma|\log\log |\Gamma|)$ vertices\footnote{The construction of Lemma \ref{lemma:stabilizedgamma} from \cite{HKNS} has order $O(|\Gamma|^4)$, but can  be replaced by a construction of Sabidussi \cite{Sabidussi} with $O(|\Gamma|\log|\Gamma|\log\log|\Gamma|)$ vertices. Then, carefully applying the construction of Lemma \ref{lemma:trivialtoall} to Sabidussi's construction, the number of vertices is increased by a multiplicative factor of $O(\log |\Gamma|)$.}, Babai \cite{Babai} proved that for every finite group $\Gamma$ there is a graph $G$ with $\Aut(G) \cong \Gamma$ and $|V(G)| \leq 3|\Gamma|$.
Can graphs with $O(|\Gamma|)$ vertices be used to satisfy Lemma \ref{lemma:trivialtoall}?
Also, the constructions used here contain vertices of high degree. 
Does there exist a constant $D$ so that Theorem \ref{thm:generalized3} is satisfied with the maximum degree of $G_0$ at most $D$?

%

\bibliographystyle{plain}

\end{document}